\definecolor{bluex}{Hsb}{220, 1, 1}
\newcommand{\borderref}[2]{{\hypersetup{linkcolor=black}\hyperref[#1]{#1 #2}}}
\newcommand{\arxiv}[2][]{\ifthenelse{\equal{#1}{}}
{\href{http://arxiv.org/abs/#2}{\tt arXiv:#2}}
{\href{http://arxiv.org/abs/math/#2}{\tt arXiv:math.#1/#2}}}
\theoremstyle{plain}
\newtheorem{theorem}{Theorem}[section]
\newtheorem*{theorem*}{Theorem}
\newtheorem{lemma}[theorem]{Lemma}
\newtheorem{proposition}[theorem]{Proposition}
\newtheorem{corollary}[theorem]{Corollary}
\newtheorem*{corollary*}{Corollary}
\newtheorem*{problem*}{Problem}
\newtheorem*{addendum*}{Addendum}
\theoremstyle{definition}
\newtheorem{remark}[theorem]{Remark}
\newtheorem*{remark*}{Remark}
\newtheorem{example}[theorem]{Example}
\newtheorem*{examples*}{Examples}
\newtheoremstyle{named}{}{}{\itshape}{}{\bfseries}{.}{.5em}{\thmnote{#3}}
\theoremstyle{named}
\def\x{\times}
\def\but{\setminus}
\def\phi{\varphi}
\def\emptyset{\varnothing}
\renewcommand{\:}{\colon}
\def\Z{\mathbb{Z}}
\def\xr#1{\xrightarrow{#1}}
\newcommand{\xR}[2][]{\ext@arrow 0359\Rightarrowfill@{#1}{#2}}
\newcommand{\xL}[2][]{\ext@arrow 0359\Leftarrowfill@{#1}{#2}}
\DeclareMathOperator{\lk}{lk}
\DeclareMathOperator{\id}{id}
\begin{document}
\title{Local knots and the prime factorization of links} 
\author{Sergey A. Melikhov}
\address{Steklov Mathematical Institute of Russian Academy of Sciences, 8 Gubkina st., 119991 Moscow, Russia}
\email{melikhov@mi-ras.ru}

\begin{abstract}
The present note contains a new proof of Y. Hashizume's 1958 theorem that every non-split link in $S^3$ 
admits a unique factorization into prime links. 
While the new proof does not go far beyond standard techniques, it is considerably shorter than 
the original proof and avoids most of its case exhaustion.

We apply this proof to obtain a string link version (and also an alternative proof) of a 1972 theorem of D. Rolfsen:
two PL links in $S^3$ are ambient isotopic if and only if they are PL isotopic and their respective 
components are ambient isotopic.
It is tempting to dismiss this string link version as obvious by deriving it directly either from
Rolfsen's or Hashizume's theorem.
But this does not seem to be possible, as it turns out that there exists a string link that has 
no local knots, while its closure has a local knot.
\end{abstract}

\maketitle

\section{Introduction} \label{intro}

By a {\it link} we will mean, unless noted otherwise, a PL link in $S^3$, that is, a piecewise linear embedding $mS^1\to S^3$, 
where $mS^1=\{1,\dots,m\}\x S^1$.

Given an $m$-component link $L$, an $n$-component link $L'$ and integers $i\in\{1,\dots,m\}$ and $j\in\{1,\dots,n\}$,
there is an $(m+n-1)$-component link $L\#_{i,j}L'$, which is well-defined up to an ambient isotopy (see \S\ref{factorization} 
for the details).
This ``connected sum along selected components'' was introduced by Y. Hashizume \cite{Has}, who also obtained the
following result (see Theorem \ref{hashizume} for a more detailed statement).

\begin{theorem}[Hashizume \cite{Has}] \label{hashizume0}
Every non-split link in $S^3$ admits a unique decomposition into prime factors (well-defined up to ambient isotopy
and up to a permutation of the factors) with respect to the operation of Hashizume connected sum.
\end{theorem}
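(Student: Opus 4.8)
The plan is to recast Hashizume connected sum in terms of \emph{decomposing spheres} and then run the standard innermost-disk machinery, in close analogy with Milnor's uniqueness proof for the Kneser--Milnor prime decomposition of $3$-manifolds.

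First I would fix the dictionary between sums and spheres. A nontrivial factorization $L=L_1\#_{i,j}L_2$ is recorded by a PL $2$-sphere $S\subset S^3$ meeting $L$ transversally in exactly two points, both on a single component, such that neither of the two balls bounded by $S$ meets $L$ in a boundary-parallel arc; cutting along $S$ and capping off the two half-components recovers $L_1$ and $L_2$. Correspondingly, a non-split link is \emph{prime} precisely when every such decomposing sphere is inessential, i.e.\ bounds a ball meeting $L$ in a single unknotted arc, the unit of the operation being the unknot. A factorization into $r$ prime factors is thus encoded by a system $\Sigma$ of $r-1$ disjoint, pairwise non-parallel, essential decomposing spheres all of whose complementary pieces are prime. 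The non-split hypothesis enters here to discard spheres disjoint from $L$: any such sphere bounds a ball missing $L$, so only the two-point spheres are relevant.

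For existence I would appeal to Haken finiteness in the exterior $S^3\but N(L)$, where a decomposing sphere restricts to a properly embedded annulus with meridional boundary. The a priori bound on the number of disjoint, pairwise non-parallel essential surfaces yields a maximal essential system $\Sigma$; by maximality no complementary piece carries a further essential decomposing sphere, so every piece is prime and a prime factorization exists.

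The real content is uniqueness. Given two prime-realizing systems $\Sigma$ and $\Sigma'$, I put them in general position, so that $\Sigma\cap\Sigma'$ is a finite union of circles disjoint from $L$, and remove these circles one at a time. Let $c$ be innermost on some $S'\in\Sigma'$, bounding a disk $D'\subset S'$ whose interior misses both $L$ and the sphere $S\in\Sigma$ carrying $c$. The observation that keeps the argument short is a parity count: for either disk $E\subset S$ bounded by $c$ the closed surface $D'\cup E$ has zero algebraic intersection with $L$, and since $D'$ misses $L$ it follows that each $E$ meets $L$ algebraically in $0$; as the two points of $S\cap L$ carry opposite signs, they must lie on the \emph{same} disk. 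Surgering $S$ across the complementary, link-free disk then yields a decomposing sphere meeting $S'$ in strictly fewer circles, and iterating (discarding any inessential spheres that appear) lets me assume $\Sigma\cap\Sigma'=\emptyset$. The two places I expect to need the most care are verifying that each such surgery preserves the factorization realized by the system, and the final comparison: with the systems disjoint, every sphere of $\Sigma'$ lies in a single prime piece of $\Sigma$, where it cannot be essential, hence is inessential or parallel to a boundary sphere, and symmetrically for $\Sigma$. Since every $2$-sphere in $S^3$ separates, the dual decomposition graph is a tree, and bookkeeping of parallel classes then shows that, after discarding inessential spheres, $\Sigma$ and $\Sigma'$ are isotopic---identifying the two factor systems and forcing the factor lists to agree up to permutation.
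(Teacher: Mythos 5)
Your overall strategy (systems of decomposing spheres, innermost-circle surgery) is the same as the paper's, but your uniqueness argument has a genuine gap at exactly the point where the theorem is hard. You assume that some circle of $\Sigma\cap\Sigma'$ is innermost on a sphere $S'\in\Sigma'$ \emph{with the bounded disk $D'$ disjoint from $L$}. Such a circle need not exist: the circles of $\Sigma\cap\Sigma'$ on a given sphere, together with the two points where that sphere meets $L$, can be arranged so that \emph{every} innermost disk on \emph{every} sphere of both systems contains an intersection point with $L$. Already a single circle $c=S\cap S'$ does this when $c$ separates the two points of $S'\cap L$ on $S'$ and also separates the two points of $S\cap L$ on $S$ --- a configuration that actually occurs, e.g.\ for two spheres realizing the bracketings $K_1\#(K_2\#K_3)$ and $(K_1\#K_2)\#K_3$ of a composite knot. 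In that situation your parity count has nothing to apply to and no surgery is available, so the induction on the number of intersection circles never gets started. This is precisely the case that the paper's proof of Lemma \ref{punctured spheres}(a) is built to handle: there the disk $D$ is chosen innermost only in $\Sigma\setminus\Delta_+$ (so it is allowed to contain one \emph{negative} intersection point), and when $D$ does meet $L$ the surgery is justified not by parity but by primeness --- $D$ lies in a piece $P_n$ of the $\Sigma'$-decomposition and cuts the beaded link $L\cap P_n$ into $\Lambda_+$ and $\Lambda_-$, whose closures Hashizume-sum to the prime closure of $L\cap P_n$; genus additivity (Lemma \ref{non-cancellation}) then forces one of $\Lambda_\pm$ to be a beaded unknot, which is what makes the surgered system compute the same factors. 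Without this second case, or some substitute for it, your proof of uniqueness does not go through.

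There is a smaller gap on the existence side as well. Haken finiteness does give a maximal system of disjoint, pairwise non-parallel essential decomposing spheres, but maximality only tells you that no complementary piece contains a further essential sphere \emph{in its interior}; primeness of a piece is a statement about its closure, i.e.\ about spheres in $S^3$, and a factorization of the closure is not a priori realized by a sphere lying inside the piece. Bridging this is the content of the paper's Lemma \ref{separation}, which requires a genuine (if short) induction: cone off a boundary sphere, apply the inductive hypothesis, then squeeze the ball back to pull the resulting sphere into the interior of the piece. Note also that the paper avoids Haken finiteness altogether: termination of the splitting process comes from genus additivity for Hashizume sums together with the number of components (Lemma \ref{non-cancellation}), which is more elementary and, moreover, supplies exactly the non-cancellation statement that the hard case of the uniqueness argument needs anyway.
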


Theorem \ref{hashizume0} plays a fundamental role in knot theory as it is needed to make sense out of the knot and link tables, which 
traditionally list only prime links.
Nevertheless I was unable to find its proof in any textbook, even though a number of textbooks include the proof of its
best known special case --- the prime factorization of knots.
One potential reason is that textbook authors might be somewhat terrified by Hashizume's original proof \cite{Has}, 
which takes about 16 pages.
(About a half of these 16 pages consist of pictures, which could be related to the fact that much of the proof proceeds by
exhaustion of cases.)
In \S\ref{factorization} we provide an alternative proof of Theorem \ref{hashizume0}, which aims at being a bit more conceptual.
The main idea, which comes from \cite{MR2}*{proof of Lemma 2.2}, is to work with multiply punctured $3$-spheres.

By a {\it tangle} we will mean a proper PL embedding $L\:\Theta\to M$ of a compact $1$-manifold in a $3$-manifold, 
where {\it proper} means that $L^{-1}(\partial M)=\partial\Theta$.
Thus tangles include links and string links.
A tangle $L\:\Theta\to M$ is said to {\it have a local knot} if there exists a PL $3$-ball $B$ in $M$ that meets $L(\Theta)$ 
in an arc such that the union of this arc with an arc in $\partial B$ that has the same endpoints is a nontrivial knot.
A tangle is called {\it totally split} if its components are contained in pairwise disjoint $3$-balls.

Hashizume's connected sum should not be confused with ``componentwise connected sum'', which is not well-defined as such, 
but has noteworthy variations that are well-defined.
Namely, if $K$ and $L$ are $m$-component links, then their componentwise connected sum
$L\#K$ is an $m$-component link obtained by placing $K$ and $L$ in disjoint $3$-balls and joining their respective
components along $m$ pairwise disjoint bands. 
If $K$ is totally split, then $K\# L$ is easily seen to be well-defined up to ambient isotopy.
Also, for any two $m$-component string links $L$ and $L'$, their componentwise connected sum $L\# L'$ is a well-defined
$m$-component string link.
A prime factorization theorem for componentwise connected sum of $2$-component string links is proved in \cite{BBK}.

\begin{theorem} \label{global}
Every (string) link admits a unique decomposition into a componentwise connected sum of a (string) link that has no local knots
and a totally split (string) link.
\end{theorem}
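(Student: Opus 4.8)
The plan is to treat links and string links uniformly by working inside the ambient manifold $M$ and manipulating \emph{local-knot spheres}: $2$-spheres $S\subset M$ meeting $L$ transversally in two points of a single component $C_i$ and bounding a $3$-ball $B$ with $B\cap L(\Theta)$ a knotted arc and $B$ disjoint from the remaining components. Such a sphere records a connected summand of the knot type of $C_i$ that can be split off \emph{cleanly}, i.e.\ without disturbing the other components; this is exactly the data of a local knot of $L$ sitting on $C_i$, and the condition ``$L_0$ has no local knots'' is the absence of all such spheres. The abstract's warning — that a string link may have no local knots although its closure does — means precisely that one must not pass to closures, so the entire argument will be carried out internally in $M$, which the clean-sphere formulation makes possible.

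For existence I would fix, for each component $C_i$, a maximal collection of pairwise disjoint local-knot spheres that is \emph{reduced} (no two cobound a product region, and each bounds a ball carrying a \emph{prime} arc); finiteness follows from a standard innermost/complexity argument, the number of primes being bounded by the knot type of $C_i$. Cutting $L$ along these spheres and filling the clean balls with trivial arcs produces a (string) link $L_0$; by maximality $L_0$ has no local knots. For each $i$ the arcs removed from $C_i$ are prime knotted arcs $P_{i,1},\dots,P_{i,r_i}$, and I let the $i$-th component of the totally split (string) link $T$ be the single knot $P_{i,1}\#\cdots\#P_{i,r_i}$ in its own $3$-ball. It then remains to check $L\simeq L_0\# T$, which reduces to a \emph{summand-relocation lemma}: a clean knotted ball threaded on a component may be slid along that component to any prescribed position, any other component met on the way being pushed across the defining sphere by an isotopy supported in the complement of $C_i$ (concretely, by enlarging that component so as to pass around the ball). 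Granting this, the several summands on $C_i$ can be consolidated at a single point and matched with the band-sum description of $L_0\# T$, using that $K\# L$ is well defined when $K$ is totally split.

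For uniqueness I would first pin down $T$. In any decomposition $L\simeq L_0'\# T'$ the $i$-th component of $T'$ sits in a clean ball after banding, so it is a connected sum of local prime knots of $C_i$; I claim the multiset of these primes is an ambient-isotopy invariant of $L$. This is uniqueness of prime factorization of the knot $C_i$ (Theorem \ref{hashizume0} in the one-component case) together with the fact that \emph{which} primes are localizable is detected by the reduced system above. Hence $T'\simeq T$, since a totally split (string) link is determined by the knot types of its components. It then suffices to recover $L_0$, i.e.\ to prove cancellation: $L_0'\# T\simeq L_0\# T$ implies $L_0'\simeq L_0$. I would establish this by comparing two maximal reduced systems of local-knot spheres for $L$ via an innermost-circle exchange argument — make the two systems transversal, eliminate intersection circles outermost-first using that each bounds a disk inside a clean ball, and conclude the systems are isotopic, so the residual links agree.

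The main obstacle is this last, genuinely link-theoretic step — the well-definedness of $L_0$, equivalently the cancellation property. In the one-component case it is exactly Schubert's uniqueness, but with several linked components the defining spheres need not be disjoint from the other components after being put into general position, so the exchange argument must be run in $M\setminus\bigcup_{i'\neq i}C_{i'}$ rather than in $M$. Controlling the intersection circles there, and in particular ruling out that an innermost disk is blocked by another component, is where the real work lies; it is also the point at which the string-link and link cases cannot be merged by closing up, so the argument must track the boundary behaviour in $M$ throughout. By contrast, the summand-relocation lemma used in existence, and the reduction of ``no local knots'' to the absence of the spheres above, are comparatively routine.
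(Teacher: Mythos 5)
There is a genuine gap: your whole argument funnels into one unproven claim, and that claim is the actual content of the theorem. Existence (``by maximality $L_0$ has no local knots''), invariance of the multiset of localizable primes, and the cancellation step all reduce to showing that two transverse systems of local-knot spheres can be exchanged past one another, and you explicitly defer this (``where the real work lies'') rather than prove it. The paper does exactly this work, and its structure shows why your sketch is insufficient. The innermost-circle argument (Lemma \ref{punctured spheres}(a) in the paper) splits into two genuinely different cases according to whether the innermost disk $D$ is disjoint from the negative intersection points $\Delta_-$ or meets $L$ in one point. In the first case the circle $C$ bounds no disk ``inside a clean ball'' at all: after surgery one gets a $2$-sphere disjoint from $L$, and ruling out that it splits $L$ requires non-splitness for links --- which the paper arranges beforehand by decomposing $L$ into non-split sublinks via Proposition \ref{partition}(a) --- and, for string links, the observation that a string link has no closed components. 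In the second case one needs to know that a beaded unknot splits off, which rests on primality of the pieces together with genus additivity $g(L\#_{i,j}L')=g(L)+g(L')$ and the non-cancellation Lemma \ref{non-cancellation}; your proposal has no complexity argument playing this role. Finally, your intended conclusion that the two sphere systems ``are isotopic'' is stronger than what is true or needed; only the resulting multisets of factors coincide, which is a separate combinatorial argument (Lemma \ref{punctured spheres}(b)).

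Two further points. First, your uniqueness of $T$ invokes ``the fact that which primes are localizable is detected by the reduced system,'' but that fact is precisely what must be proved; Example \ref{nonlocal-knot} of the paper shows localizability is not determined by the knot types of the components (nor by the closure), so it cannot be outsourced to Schubert's theorem for knots. Second, your insistence on treating links and string links uniformly inside $M$ makes the link case harder than it should be: for links the theorem is a two-line corollary of Hashizume's theorem (partition into non-split sublinks, factorize each, discard the knot factors), and the internal sphere-exchange machinery is only genuinely needed for string links --- where the paper carries it out by rerunning the punctured-sphere proof of Theorem \ref{hashizume} inside $I^3$ with an inner/outer dichotomy: inner punctured spheres cut out beaded knots with prime closures, while the outer one (containing $\partial I^3$) cuts out a tangle with no local knots, and each case of the exchange lemma is reproved with the boundary behaviour tracked. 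Your plan points in this direction but stops exactly where the proof begins.
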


Here the case of string links is proved by using the new proof of Theorem \ref{hashizume0}, while the case of links is 
an easy consequence of Theorem \ref{hashizume0} itself (see \S\ref{local knots} for the details).
The following example suggests that the case of string links is not a direct consequence of the case of links.

\begin{figure}[h]
\includegraphics[width=\linewidth]{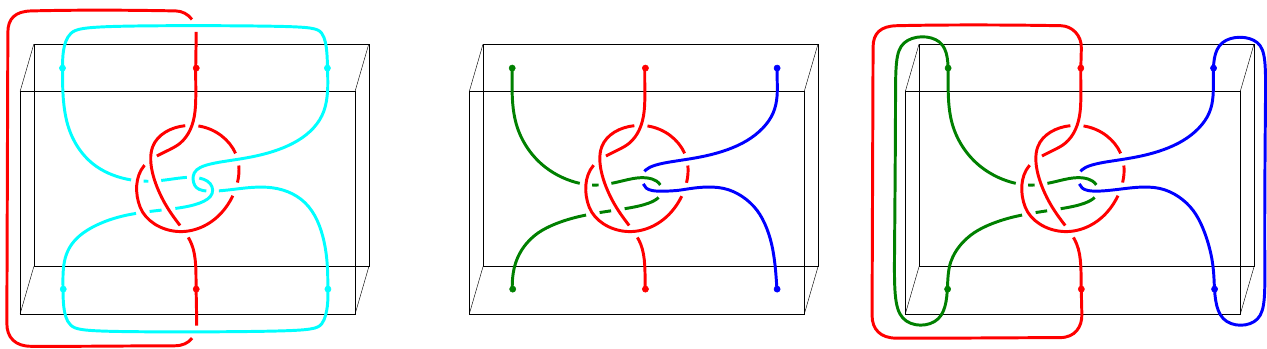}
\caption{A string link with no local knots whose closure has a local knot.}
\label{nonlocal}
\end{figure}

\begin{example} \label{nonlocal-knot}
There exists a $3$-component string link $L$ such that $L$ has no local knots, but its closure has a local knot.
See Figure \ref{nonlocal}, where $L$ is shown in the middle and its closure on the right.

Indeed, it is easy to see that the closure of $L$ has a local knot. 
To show that $L$ itself has no local knots, we consider a non-standard two-component closure $\bar L$ of $L$,
shown on the left in Figure \ref{nonlocal}.
Clearly, if $L$ has a local knot, then so does $\bar L$.
Moreover, if $\bar L$ has a local knot, then this local knot must be one of the prime factors of its components.
But the only prime factor of the components of $\bar L$ is the trefoil knot.
Thus it suffices to show that $\bar L$ has no local knot that is the trefoil.

The Jones polynomial of $\bar L$ is $V_L(t)=t^{-7/2}-t^{-5/2}-t^{-1/2}-t^{1/2}-t^{5/2}+t^{7/2}$.
The Jones polynomial of the trefoil knot $T$ is $V_T(t)=t+t^3-t^4$.
If $T$ is a local knot in $\bar L$, then $V_L(t)$ must be divisible by $V_T(t)$ in $\Z[t^{\pm1/2}]$
(see \cite{Lic}*{p.\ 29}; compare \cite{Ro4}).
But it is easy to see that $V_L(t)=t^{-7/2}\big(t^4(t^3-t^2-1)-(t^3+t-1)\big)$ is not divisible by 
$V_T(t)=(-t)(t^3-t^2-1)$.
\end{example}

\begin{remark} Let us note that the non-locality of the trefoil in $\bar L$ is not detected by
the Conway polynomial and the Conway potential function.
In fact their values on a possibly twisted Whitehead double of a two-component link $(K,K')$ along $K$
depend only on $\lk(K,K')$ and on the knot $K'$ (see \cite{M24-3}*{Corollary \ref{part3:whitehead}}).
\end{remark}

For every (string) link $L$ there is a unique, up to ambient isotopy, totally split (string) link $K_L$ whose 
components are ambient isotopic to the respective components of $L$.

\begin{corollary} \label{rolfsen-lemma} If $L$ and $L'$ are 

(a) links, or 

(b) string links

\noindent
that are PL isotopic, and $K_L=K_{L'}$ up to an ambient isotopy, then $L$ and $L'$ are ambient isotopic.
\end{corollary}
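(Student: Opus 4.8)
The plan is to reduce the statement, by means of Theorem \ref{global}, to the case of (string) links without local knots, and then to prove that for such (string) links PL isotopy already implies ambient isotopy. Applying Theorem \ref{global}, write $L = L_0 \# K$ and $L' = L'_0 \# K'$ with $L_0, L'_0$ having no local knots and $K, K'$ totally split. Since a component with no local knot is unknotted (a knotted component would contain its own local knot), the components of $L_0$ and of $K$ connect-sum to those of $L$; hence $K$ carries the component knot types of $L$ and so agrees with $K_L$ up to ambient isotopy, and likewise $K' = K_{L'}$. By hypothesis $K_L = K_{L'}$ up to ambient isotopy, so, using that componentwise connected sum with a totally split factor is well-defined up to ambient isotopy, it suffices to show that $L_0$ and $L'_0$ are ambient isotopic.

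Next I would observe that $L$ is PL isotopic to $L_0$: each component of $K$ is a local knot housed in a $3$-ball, and a properly embedded PL arc in a $3$-ball is PL isotopic rel endpoints to the unknotted arc (``pull the local knot tight and swallow it''), so all of these local knots can be removed by a PL isotopy supported in the disjoint balls, which keeps the whole (string) link embedded at every instant. The same holds for $L'$, whence $L_0$ is PL isotopic to $L'_0$. For (string) links without local knots the extra hypothesis is vacuous (both $K_L$ and $K_{L'}$ are trivial), so the residual assertion is exactly that \emph{PL-isotopic (string) links without local knots are ambient isotopic}.

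To establish this I would analyze the trace of the PL isotopy, a properly embedded copy of $\Theta\times I$ in $S^3\times I$ (in $B^3\times I$ for string links). By the PL isotopy extension theorem a PL isotopy whose trace is locally flat extends to an ambient isotopy, so it is enough to modify the given isotopy until its trace is locally flat. The trace can fail to be locally flat only at isolated interior points, each recording the instant at which a knotted sub-arc of one component is pulled through a point---that is, the creation or annihilation of a local knot on that component. Thus $L_0$ is carried to $L'_0$ by ambient isotopies interspersed with connect-summing and un-summing single knots along individual components. As $L_0$ and $L'_0$ have no local knots, on each component the primes summed in must coincide, as a multiset, with those taken out, by the uniqueness half of Theorem \ref{hashizume0}; commutativity of connected sum then lets one match each creation with an annihilation, and each matched pair is undone by an ambient isotopy carrying the annihilating ball onto the creating one. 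Organizing these cancellations, equivalently removing the non-locally-flat points of the trace in pairs, requires the uniqueness of the connect-sum spheres supplied by the multiply-punctured-sphere technique, and this is the step I expect to be the main obstacle.

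The two cases then diverge only in which version of Theorem \ref{global} is invoked. Part (a) uses the link version, a consequence of Theorem \ref{hashizume0}. Part (b) genuinely needs the string link version, since by Example \ref{nonlocal-knot} the closure of a string link may carry a local knot absent from the string link itself; there is therefore no shortcut through closing up and appealing to the link case, and the entire analysis of the creation and annihilation events must be carried out for string links directly.
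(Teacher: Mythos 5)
Your reduction breaks at its very first step: you claim that a component of a (string) link with no local knots must be unknotted, ``since a knotted component would contain its own local knot.'' This is true for knots but false for links, and the paper's own Example \ref{nonlocal-knot} refutes it: the two-component closure $\bar L$ considered there has knotted components (their only prime factor is the trefoil), yet the Jones polynomial argument shows that $\bar L$ has no local knot --- a ball exhibiting a local knot must meet the \emph{whole} link in a single arc, and the clasping of the other component obstructs every such ball. Consequently your identification of the totally split factor $K$ with $K_L$ fails, as does your later assertion that for links without local knots the hypothesis $K_L=K_{L'}$ is vacuous because ``both $K_L$ and $K_{L'}$ are trivial.'' This is not a cosmetic slip: after proving $L_0$ ambient isotopic to $L_0'$, you still need $K$ ambient isotopic to $K'$ to conclude, and the only way to get it is the step the paper performs and you skip --- write $K_L=K_{\Gamma(L)}\#\Lambda(L)$ (and similarly for $L'$), note that $\Gamma(L)=\Gamma(L')$ forces $K_{\Gamma(L)}=K_{\Gamma(L')}$, and then cancel $K_{\Gamma(L)}$ using the uniqueness of the prime factorization of \emph{knots}, componentwise, to deduce $\Lambda(L)=\Lambda(L')$.

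The second gap is the core claim that PL isotopic (string) links without local knots are ambient isotopic. What you offer for it is a sketch of Rolfsen's original argument (non-locally-flat points of the trace correspond to creation and annihilation of local knots, to be matched and cancelled), and you yourself flag the organization of the cancellations as ``the main obstacle''; so this part is a restatement of the difficulty rather than a proof. The paper avoids the cancellation scheme entirely: by the uniqueness in Theorem \ref{global}, adding a local knot to $L$ changes only the totally split part $\Lambda(L)$, so the global part $\Gamma(L)$ is invariant under addition of local knots; since PL isotopy is generated by ambient isotopy together with addition of local knots (the footnote, cf.\ \cite{Ro2}*{Theorem 4.2}), $\Gamma$ is a PL isotopy invariant, and for links with no local knots $L=\Gamma(L)$, $L'=\Gamma(L')$. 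This one-line invariance argument is exactly what replaces your matching/cancellation analysis, and once it is in place your study of the trace becomes unnecessary. Your closing remark that part (b) genuinely requires the string link version of Theorem \ref{global}, rather than closing up and quoting the link case, does agree with the paper.
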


Part (a) is due to D. Rolfsen \cite{Ro0}, who proved it by a different method, making use of the prime 
factorization of knots.%
\footnote{Here is a quote from his proof: ``[The given PL isotopy] has singularities, corresponding to 
the appearance and vanishing of knots. [...] The plan is to move all these bad points to a single level, 
say $t=1/2$, and then `cancel' them''.}
Rolfsen also observed that its assertion fails for links in $S^1\x S^2$ \cite{Ro1}*{Example 2}.

\begin{proof}
Theorem \ref{global} yields a decomposition $L=\Gamma(L)\#\Lambda(L)$ up to ambient isotopy, 
where $\Gamma(L)$ (the ``global part'') has no local knots and $\Lambda(L)$ (the ``local part'') is totally split.
It is easy to see that $\Gamma(L)$ is invariant under adding local knots to $L$.
Therefore it is invariant under PL isotopy.%
\footnote{It is not hard to see (cf.\ \cite{Ro2}*{Theorem 4.2}) that two links are PL isotopic if and only if
they are related by the equivalence relation generated by ambient isotopy and addition of local knots.}

Since $L$ and $L'$ are PL isotopic, $\Gamma(L)=\Gamma(L')$.
In particular, $K_{\Gamma(L)}=K_{\Gamma(L')}$.
But clearly $K_L=K_{\Gamma(L)}\#\Lambda(L)$, and similarly for $L'$.
Since $K_L=K_{L'}$, we obtain (using the uniqueness of the prime factorization of a knot) that $\Lambda(L)=\Lambda(L')$.
Since $L$ represents the ambient isotopy type of $\Gamma(L)\#\Lambda(L)$ and similarly for $L'$, we conclude that $L$ 
is ambient isotopic to $L'$.
\end{proof}

\begin{remark}
Corollary \ref{rolfsen-lemma}(a) is used in a subsequent paper \cite{M24-1} to prove the following:
{\it Finite type invariants separate string links if and only if finite type invariants separate knots and finite type invariants, 
well-defined up to PL isotopy, separate PL isotopy classes of string links.}
When string links are replaced with links, the corresponding assertion remains an open problem, although 
it is not hard show, using Corollary \ref{rolfsen-lemma}(b) and the Kontsevich integral, that its version for
rational finite type invariants holds \cite{M24-1}.
\end{remark}

\begin{remark}
The referee made the following remarkable comment on the last two sentences of the abstract of the present paper:

\smallskip
\begin{center}
\parbox{14cm}{\small
``It might be possible to derive the string link version [of Rolfsen's theorem] from [the statements of Rolfsen's and Hashizume's theorems] 
by considering not the closure but the double of a string link (that is, when the given string link is glued to its symmetric copy).''}
\end{center}
\smallskip

Presumably the double of a given string link $L$ is understood to lie in a solid torus that is standardly embedded in $S^3$.
If so, then the double of $L$ is the same as the closure of $L\#\rho L$, where $\rho L$ is the reflection of $L$. 
That is, $\rho L$ is the composition $[m]\x I\xr{\id\x r}[m]\x I\xr{L} D^2\x I\xr{\id\x r}D^2\x I$, where
$[m]=\{1,\dots,m\}$ and $r\:I\to I$ is an orientation reversing homeomorphism.

The doubling construction does seem to ``kill'' Example \ref{nonlocal-knot} in the sense that it seems very unlikely that there exists a string link 
with no local knots whose double has a local knot.
Nevertheless, I doubt that it is possible to derive the string link version of Rolfsen's theorem from the statements of Rolfsen's and Hashizume's theorems
by using doubling.
For consider the following example: There exists a string link whose closure is the Whitehead link (see Figure \ref{wh-string}, on the right) 
but whose double is an unlink (see Figure \ref{wh-string}, on the left).
\begin{figure}[h]
\includegraphics[width=0.7\linewidth]{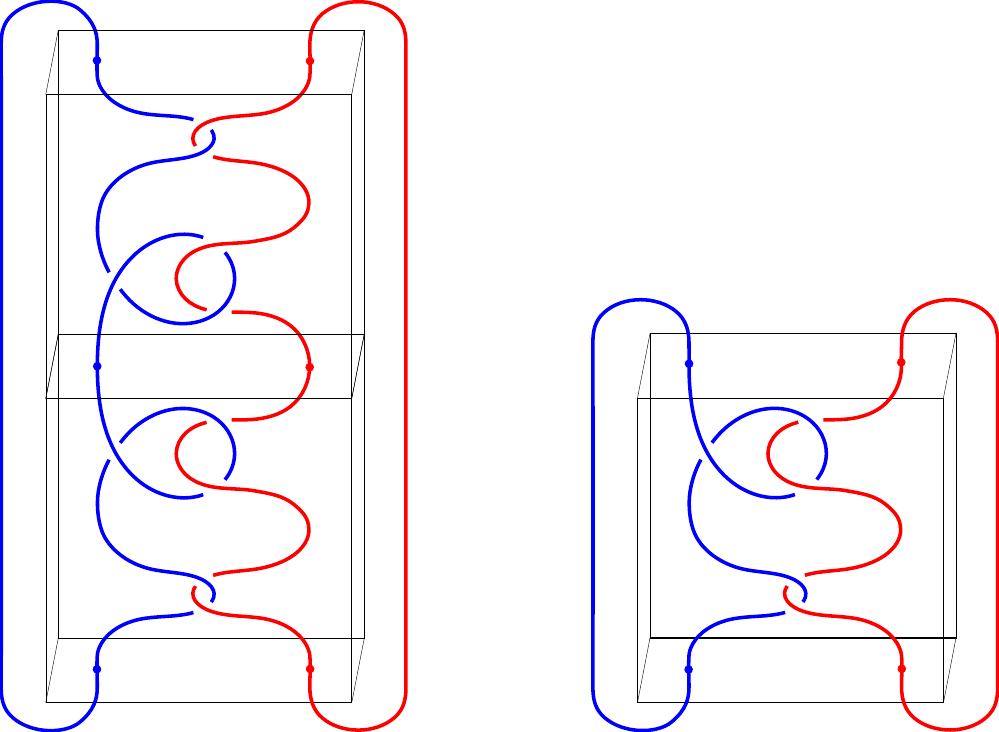}
\caption{A string link whose closure is the Whitehead link but whose double is an unlink.}
\label{wh-string}
\end{figure}
\end{remark}

In what follows we will often abuse notation by denoting links, string links and tangles by what really are their images.
These images are always considered to be oriented submanifolds of the oriented $3$-manifold and to have ordered components
(even if they appear to be treated as subsets).

\section{Split links}

A link $L$ is called {\it split} if there exists a PL $3$-ball $B\subset S^3$ 
such that $L$ is disjoint from $\partial B$ but meets both $B$ and $S^3\but B$.
A string link $L\:mI\to I^3$ is called {\it split} if there exists a properly embedded PL $2$-disk 
$D\subset I^3$ such that $L$ is disjoint from $D$ but meets both components of $I^3\but D$.

\begin{proposition} \label{partition}
(a) Given a link $L$, there exists a family of pairwise disjoint PL $3$-balls $B_i$ in $S^3$ such that $L$ lies in their
union and meets each $B_i$ in a non-split sublink.

(b) Given a string link $L$, there exists a family of pairwise disjoint properly embedded PL $2$-disks in $I^3$
such that $L$ is disjoint from their union $\Delta$ and meets each component of $I^3\but\Delta$ in a non-split sublink.
\end{proposition}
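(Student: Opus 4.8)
The plan is to prove both parts by repeatedly splitting, inducting on the number of components (respectively strands). The basic observation driving the induction is that a one-component sublink, being connected and disjoint from any separating sphere (respectively disk), must lie entirely on one side of it and is therefore automatically non-split. Hence each splitting distributes the components (strands) into two nonempty groups, strictly decreasing the count in each resulting piece; since a link or string link has only finitely many components, the process terminates with every piece non-split.

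For part (a), if $L$ is non-split I would simply enclose it in a single PL $3$-ball (it misses a point of $S^3$) and stop. If $L$ is split, there is a PL sphere $S$ disjoint from $L$ meeting both complementary regions; by Alexander's theorem each of these regions is a PL $3$-ball, and $L$ meets each in a nonempty sublink with strictly fewer components. To make the two balls disjoint I would fix a bicollar $S\x[-1,1]$ of $S$ disjoint from $L$ (possible since $L\cap S=\emptyset$) and push each ball slightly off $S$ into its own side, so that the two shrunken balls remain balls, are disjoint, and each still contains its sublink. Applying the inductive hypothesis inside each ball and taking the union of the two resulting families of balls completes the step; the balls from opposite sides are automatically disjoint, and together they contain all of $L$.

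For part (b) the scheme is identical, with a properly embedded PL $2$-disk $D$ in the role of the separating sphere: such a $D$ cuts $I^3$ into two PL $3$-balls, and since $L$ is disjoint from $D$ each strand lies wholly on one side, so the two sides receive complementary nonempty collections of strands. Again I would fix a bicollar of $D$ disjoint from $L$ to keep the disks of the final family pairwise disjoint, and then recurse on the tangle cut out in each sub-ball (a mild generalization of the statement to tangles in balls, with all endpoints on the boundary). The main obstacle is that a separating disk produced inside a sub-ball $P$ is a priori only properly embedded in $P$, so its boundary circle may run along the previously chosen disks rather than along $\partial I^3$, as the statement demands. To overcome this I would exploit the fact that the faces of $\partial P$ coming from the earlier disks carry no endpoints of $L$, since all endpoints lie on $\partial I^3$: an innermost-arc argument then lets me isotope the boundary of the new disk off those faces and onto $\partial I^3$ rel $L$, supported in a collar of $\partial P$ away from $L$, after which a further small isotopy in the bicollars removes any intersections thereby created with the earlier disks. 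I expect this book-keeping --- arranging at each stage that the disks are simultaneously pairwise disjoint, disjoint from $L$, and properly embedded in $I^3$ --- to be the only genuinely technical point, the termination and the counting of components being immediate.
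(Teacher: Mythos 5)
Your part (a) breaks down at exactly the point where all the real work of this proposition lies. As you state it, the inductive hypothesis applied to the sublink $L_+=L\cap B_+$ produces a family of disjoint PL balls \emph{somewhere in $S^3$}: nothing confines them to the interior of $B_+$. A ball of the family produced for $L_+$ may engulf or intersect $B_-$, and may even contain or cross strands of $L_-=L\cap B_-$; so your claim that ``the balls from opposite sides are automatically disjoint'' is unjustified (and false in general), and the final family can also violate the requirement that each ball meet $L$ in a (non-split) sublink of the whole link $L$. Nor is the problem cured by relativizing the statement being proved (``balls inside a prescribed ball $B$''), which is what ``applying the inductive hypothesis inside each ball'' implicitly requires: with that strengthening, the inductive \emph{step} breaks instead, because a splitting sphere $S$ for a split link lying in $B$ is, by definition, only a sphere somewhere in $S^3$. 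Your bicollar trick separates the two complementary balls of $S$ from each other, but does nothing to bring them inside $B$. What is missing is precisely the paper's key lemma: \emph{if a split link lies in a PL ball $B_0$, then there are two disjoint PL balls inside $B_0$ whose union contains the link and each of which meets it.} The paper proves this by taking a big ball $B_1$ containing both $B_0$ and the splitting ball $B$, invoking the PL annulus theorem to get a PL homeomorphism $B_1\to B_0$ fixing $L$ pointwise (which carries $B$ onto a ball $B_+\subset\mathrm{int}\,B_0$), and then producing the second ball as $B_-=\overline{B_0\setminus N}$, where $N$ is a regular neighborhood of $B_+\cup J$ meeting $L$ only in $B_+$ and $J$ is an arc from $\partial B_+$ to $\partial B_0$ missing $L$. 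Some argument of this kind (the annulus theorem, or an innermost-circle surgery of $S$ along $\partial B_0$ plus the same arc-and-regular-neighborhood step) is unavoidable; without it your recursion does not close up.

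Ironically, you did spot the analogous difficulty in part (b) --- that a separating disk produced inside a sub-ball $P$ need not be properly embedded in $I^3$ --- and your proposed remedy there (relativize to tangles in balls, then push $\partial D'$ off the faces of $\partial P$ coming from the earlier disks by an innermost-arc isotopy rel $L$, using that those faces carry no endpoints of $L$) is sound, though it differs from the paper's mechanism, which instead takes a homeomorphism $h\colon P\to I^3$, supported near the old disks, squeezing them to small disks disjoint from the splitting disk $D'$, and pulls $D'$ back by $h$. Transplant that same awareness into part (a): formulate and prove the relative splitting lemma above, and the rest of your argument (termination by counting components, disjointness at a single splitting step) goes through.
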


Part (a) is presumably well-known, but I could not find it in textbooks.
Part (b) will not be used in the remainder of the paper. 
But for completeness we prove both.

\begin{proof}[Proof. (a)]
Let $B_0$ be some PL ball containing $L$.
If $L$ is non-split, this completes the proof.
To proceed we need the following lemma.

\begin{lemma} \label{split0}
If $L$ is split and lies in a PL ball $B_0$, then there exist disjoint PL balls $B_+$ and $B_-$ in $B_0$ such that $L$ lies 
in their union and meets both of them.
\end{lemma}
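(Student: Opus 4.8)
The plan is to produce, by simplification, a splitting sphere that lies \emph{inside} $B_0$ and then convert it into the two required balls. First I would reduce to the case $L\subset\operatorname{int}B_0$ (replacing $B_0$ by a slightly smaller concentric ball if necessary). Since $L$ is split, fix a PL $2$-sphere $S=\partial B$ that is disjoint from $L$ and separates $S^3$ into balls $B$ and $B'$ with $L_1\deq L\cap B$ and $L_2\deq L\cap B'$ both non-empty. After a small isotopy keeping $S$ disjoint from the compact set $L$, I may assume $S$ is transverse to $\partial B_0$, so that $S\cap\partial B_0$ is a finite disjoint union of circles. Among \emph{all} such spheres (disjoint from $L$, transverse to $\partial B_0$, separating $L$ into two non-empty parts) I would choose one minimizing the number of circles in $S\cap\partial B_0$, and then argue that this minimum is $0$.

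The reduction step is a standard innermost-disk surgery. Suppose $S\cap\partial B_0\ne\emptyset$ and let $c$ be a circle of $S\cap\partial B_0$ that is innermost on $\partial B_0$, bounding a disk $D_0\subset\partial B_0$ with $\operatorname{int}D_0\cap S=\emptyset$; note $D_0\cap L=\emptyset$ because $L\subset\operatorname{int}B_0$. Surgering $S$ along $D_0$ replaces it by two disjoint spheres $\Sigma_1,\Sigma_2$, and since $c$ is destroyed while $D_0$ contributes no new intersections, each $\Sigma_i$ meets $\partial B_0$ in strictly fewer circles than $S$. The key point is that one of $\Sigma_1,\Sigma_2$ still splits $L$: the surgery is supported near $D_0$, which lies to one side of $S$ and is disjoint from $L$, so the part of $L$ on the far side of $S$ stays intact inside (or outside) both new spheres, while the part on the near side is distributed between them; as that near part is a non-empty union of components, at least one $\Sigma_i$ has non-empty intersection with $L$ on both of its sides. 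This $\Sigma_i$ contradicts minimality, so the minimizing $S$ satisfies $S\cap\partial B_0=\emptyset$. A sphere disjoint from $\partial B_0$ lies entirely in $\operatorname{int}B_0$ or entirely in its exterior, and an exterior sphere cannot separate the components of $L\subset\operatorname{int}B_0$; hence $S\subset\operatorname{int}B_0$.

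It then follows that one of the two balls bounded by $S$, say $B$, lies in $\operatorname{int}B_0$ (the side not containing $\partial B_0$), so that $L_1\subset\operatorname{int}B$ and the complementary region $Y\deq B_0\but\operatorname{int}B\cong S^2\x I$ contains $L_2$ in its interior. I set $B_+\deq B$, which is a ball in $B_0$ containing $L_1$. The main obstacle is the final task of enclosing $L_2$ in a ball $B_-\subset\operatorname{int}Y$ disjoint from $B$ (note that $\operatorname{int}Y$ is \emph{not} a ball, so this is not automatic). I resolve it by puncturing the $S^2$ factor: since $L_2$ is a compact $1$-complex, its projection to $S^2$ is a proper subpolyhedron, so some point $p\in S^2$ is missed, giving $L_2\subset(S^2\but\{p\})\x(0,1)\cong\R^3$. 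A compact subset of $\R^3$ lies in a PL ball, and such a ball $B_-$ sits in $\operatorname{int}Y$, hence is disjoint from $B=B_+$. Thus $B_+$ and $B_-$ are disjoint PL balls in $B_0$ whose union contains $L=L_1\cup L_2$, with $L_1\subset B_+$ and $L_2\subset B_-$ both non-empty, which is exactly what the lemma asserts.
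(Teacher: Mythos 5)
Your proof is correct, but it takes a genuinely different route from the paper's. The paper does not surger the splitting sphere at all: it takes a large ball $B_1$ containing both $B_0$ and the splitting ball $B$ in its interior and invokes the PL annulus theorem to produce a PL homeomorphism $B_1\to B_0$ keeping $L$ fixed; the image of $B$ under this homeomorphism is then automatically a splitting ball $B_+\subset\operatorname{int}B_0$. For the second ball, instead of your projection trick, the paper joins $\partial B_+$ to $\partial B_0$ by an arc $J$ in $\overline{B_0\setminus B_+}$ avoiding $L$ (possible since $L$ is $1$-dimensional) and sets $B_-=\overline{B_0\setminus N}$, where $N$ is a regular neighborhood of $B_+\cup J$ in $B_0$ chosen so that $\overline{N\setminus B_+}$ misses $L$. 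What the paper's route buys is brevity: no transversality, no minimality induction, no case analysis --- ``moving $B_0$ onto the sphere instead of the sphere into $B_0$'' does everything in one step. What your route buys is that it runs on the most standard machinery, innermost-circle surgery, which is in fact the very technique the paper itself uses later in the proof of Lemma \ref{punctured spheres}(a); your key step (the far part of $L$ stays on one side of both surgered spheres, the near part is distributed between them, so one of the two spheres still splits $L$) is exactly right, and your construction of $B_-$ --- identifying $\overline{B_0\setminus B_+}$ with $S^2\times I$ and puncturing the $S^2$ factor at a point missed by the projection of the $1$-polyhedron $L_2$ --- is a nice elementary alternative to the paper's regular-neighborhood-complement argument, which tacitly uses that the closure of the complement of a ball meeting $\partial B_0$ in a disk is again a ball. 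One minor imprecision: your opening reduction to $L\subset\operatorname{int}B_0$ via ``a slightly smaller concentric ball'' is circular as stated (a smaller ball contains $L$ only if $L$ already misses $\partial B_0$), but this is harmless, since the paper's own proof makes the same tacit assumption $L\cap\partial B_0=\emptyset$, and in the only application (Proposition \ref{partition}(a)) the ball $B_0$ can always be chosen with $L$ in its interior.
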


\begin{proof} Since $L$ is split, exists a PL ball $B\subset S^3$ such that $L$ is disjoint from $\partial B$ but meets both $B$ and 
$S^3\but B$.
Let $B_1$ be a PL ball containing both $B_0$ and $B$ in its interior.
By the PL annulus theorem there exists a PL homeomorphism $B_1\to B_0$ keeping $L$ fixed.
It sends $B$ onto a PL ball $B_+$ lying in the interior of $B_0$.
Since $L$ is a $1$-manifold, there exists an arc $J$ in $\overline{B_0\but B_+}\but L$ joining $\partial B_+$ to $\partial B_0$.
Let $N$ be a regular neighborhood of $B_+\cup J$ in $B_0$, disjoint from $L$, and let $B_-=\overline{B_0\but N}$.
\end{proof}

Let us note that each of $L\cap B_+$ and $L\cap B_-$ has fewer components than $L$.
Hence by repeatedly applying Lemma \ref{split0} we eventually obtain a desired family of balls.
\end{proof}

\begin{proof}[(b)] The assertion follows similarly to (a) from the following lemma.

\begin{lemma} If $L$ is split and lies in one component of $I^3\but D_0$ for some properly embedded PL $2$-disk 
$D_0\subset I^3$, then there exists a properly embedded PL $2$-disk $D\subset I^3$, disjoint from $D_0$ and such that 
$L$ is disjoint from $D$ but meets both components of $I^3\but D$.
\end{lemma}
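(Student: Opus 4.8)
The disk $D_0$ splits $I^3$ into two PL $3$-balls; let $C$ be the open component of $I^3\but D_0$ containing $L$. It suffices to find a properly embedded PL disk $D\subset I^3$ that is disjoint from $L$ and from $D_0$ and that separates $L$ (i.e.\ $L$ meets both components of $I^3\but D$). Indeed, such a $D$ is connected and disjoint from $D_0$, so it lies in the closure of one of the two components of $I^3\but D_0$; since it separates $L$ while $L\subset C$, that component must be $\bar C$ (otherwise all of $L$, lying in $C$, would be on one side of $D$), and then $D$ is exactly the disk required by the lemma.

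Since $L$ is split, it admits a splitting disk. Among all properly embedded PL disks that are disjoint from $L$ and separate $L$, choose one, $D'$, that minimizes the number $n$ of components of $D'\cap D_0$, where $D'$ is put in general position with $D_0$, so that $D'\cap D_0$ is a $1$-manifold consisting of circles (in the interiors of both disks) and arcs (with endpoints on $\partial I^3$). The plan is to show $n=0$. The whole point is that $L\cap D_0=\varnothing$, so every subdisk of $D_0$ is automatically disjoint from $L$ and may be used to modify $D'$ without ever touching $L$.

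If $D'\cap D_0$ contains a circle, take one, $c$, innermost on $D_0$, bounding a subdisk $\delta\subset D_0$ whose interior misses $D'$; then $c$ bounds a subdisk $\delta'\subset D'$, and $\delta\cup\delta'$ is a PL $2$-sphere in $\operatorname{int}I^3$ bounding a PL $3$-ball $W$. Surgering $D'$ along $\delta$ (replacing $\delta'$ by a push-off of $\delta$) produces a properly embedded disk $\tilde D$ with fewer intersection components. Because $\partial W\subset D_0\cup D'$ is disjoint from $L$ while $W$, lying in $\operatorname{int}I^3$, cannot contain a component of $L$ (each such component meets $\partial I^3$), the ball $W$ is disjoint from $L$; hence $D'$ and $\tilde D$ differ only across $W$ and partition the components of $L$ identically, so $\tilde D$ still separates $L$, contradicting minimality. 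Thus $D'\cap D_0$ contains no circles.

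It remains to rule out arcs, and this is the step I expect to be the main obstacle. Take an arc $a$ of $D'\cap D_0$ outermost on $D_0$, cutting off a subdisk $\delta\subset D_0$ (bounded by $a$ and a subarc of $\partial D_0$) whose interior misses $D'$. The arc $a$ cuts $D'$ into two disks $E_1,E_2$, and capping each with a push-off of $\delta$ yields two properly embedded disks $\tilde D_1,\tilde D_2$, both disjoint from $L$ and each with strictly fewer intersection components than $D'$. The delicate point is to verify that at least one $\tilde D_i$ still separates $L$. For this I would pick components $\gamma,\gamma'$ of $L$ lying on opposite sides of $D'$, join them by a PL arc meeting $D'$ transversally in a single point, note that this point lies in $\operatorname{int}E_1$ or $\operatorname{int}E_2$ (say $E_1$), and, after routing the arc to avoid the push-off of $\delta$, conclude that it meets $\tilde D_1$ exactly once; hence $\tilde D_1$ separates $\gamma$ from $\gamma'$ and so separates $L$. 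This again contradicts minimality, forcing $n=0$, and by the reduction of the first paragraph the resulting disk is the one asserted by the lemma.
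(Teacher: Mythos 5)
Your overall route (put the splitting disk in general position with $D_0$ and remove intersections by innermost-circle/outermost-arc surgery) is genuinely different from the paper's, which does no cut-and-paste at all: there one observes that the closure $B_0$ of the component of $I^3\setminus D_0$ containing $L$ is a PL ball by Alexander's Schoenflies theorem, takes a PL homeomorphism $h\colon B_0\to I^3$ that is the identity outside a small neighborhood of $D_0$ disjoint from $L$ (so $h$ fixes $L$) and sends $D_0$ to a small disk disjoint from the splitting disk $D'$, and simply sets $D=h^{-1}(D')$. Your reduction paragraph and your circle case are correct; in particular, the observation that the ball $W$ must miss $L$ because every component of a string link reaches $\partial I^3$ is exactly the right use of the hypothesis.

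The arc case, however, has a genuine gap at the step you flagged: the rerouting claim is false in general. Let $U_1$ be the side of $D'$ containing $\operatorname{int}\delta$ and $U_2$ the other side. Since $\delta$ is a properly embedded disk in the ball $\overline{U_1}$, it cuts $U_1$ into two pieces $V_1,V_2$, labelled so that $V_i$ abuts $\operatorname{int}E_i$; one checks that $\tilde D_1$ separates $V_1$ from $V_2\cup U_2$, while $\tilde D_2$ separates $V_2$ from $V_1\cup U_2$. Now suppose $\gamma\subset V_2$ and $\gamma'\subset U_2$, but your chosen arc runs from $\gamma$ across $\delta$ into $V_1$ and then crosses $D'$ at a point of $E_1$ --- perfectly possible. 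Your recipe then tells you to use $\tilde D_1$; but $\gamma$ and $\gamma'$ both lie in the component of $I^3\setminus\tilde D_1$ containing $V_2\cup U_2$, so \emph{every} arc joining them meets $\tilde D_1$ an even number of times, and no rerouting can produce a single transverse crossing (your arc is forced to cross the push-off of $\delta$ an odd number of times). The fix is easy and preserves your scheme: which surgered disk works is governed not by where the arc happens to cross $D'$, but by which side of $\delta$ the component $\gamma$ lies on. Since $\gamma$ is connected and disjoint from $D_0\supset\delta$, either $\gamma\subset V_1$ or $\gamma\subset V_2$; and if $\gamma\subset V_i$, then $\gamma$ and $\gamma'$ lie on opposite sides of $\tilde D_i$ (equivalently, route the arc inside $V_i$ until it crosses $\operatorname{int}E_i$, which keeps it away from $\delta$ and its push-offs). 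Hence $\tilde D_i$ separates $L$, and your minimality contradiction goes through.
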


\begin{proof}
Since $L$ is split, there exists a properly embedded PL $2$-disk $D'\subset I^3$ such that $L$ is disjoint from $D'$ 
but meets both components of $I^3\but D'$.
On the other hand, let $B_0$ be the closure of the component of $I^3\but D_0$ containing $L$.
By Alexander's Schoenflies theorem the $2$-sphere $\partial B_0$ is unknotted in $S^3$, and hence $B_0$
is a PL $3$-ball.
Then there is a PL homeomorphism $h\:B_0\to I^3$ which is the identity outside a small neighborhood of $D_0$ in $B_0$.
In particular, this neighborhood can be chosen to be disjoint from $L$.
We may further assume that $h(D_0)$ is a small disk, which is disjoint from $D'$.
Then $D:=h^{-1}(D')$ is disjoint from $D_0$ and retains the properties of $D$.
\end{proof}
\end{proof}

\section{Prime factorization} \label{factorization}

By a {\it punctured $3$-sphere} we shall mean the $3$-manifold $P_n:=\overline{S^3\but(B_1\cup\dots\cup B_n)}$ for some $n\ge 0$,
where $B_1,\dots,B_n$ are pairwise disjoint PL balls.
A {\it beaded link}, or more specifically an {\it $n$-beaded link}, consists of an oriented $1$-manifold $\Theta$ and a tangle 
$L\:\Theta\to P_n$ that sends precisely two points of $\partial\Theta$ of the opposite signs into each $\partial B_k$.
Let us note that there exists a unique orientation-reversing identification 
$\partial\Theta=\partial (nI)$ such that $L$ sends each $\{k\}\x\partial I$ into $\partial B_k$.
The {\it closure} of a beaded link $L$ is the link in $S^3$ obtained by gluing $L$ with any embedding 
$\Xi\:nI\to\partial P_n$ that sends each $\{k\}\x I$ into $\partial B_k$.
It is easy to see that the choice of a particular embedding $\Xi$ is irrelevant in the sense that the closure is well-defined 
up to ambient isotopy.
We call a beaded link a {\it beaded (un)knot} if its closure is a knot (respectively, an unknot).
The {\it beaded components} of a beaded link $L$ are the beaded knots whose closures are the components of a closure of $L$.
The {\it genus} $g(L)$ of a beaded link $L$ is the genus of its closure $\bar L$, that is, the minimal genus of a {\it connected} Seifert surface
spanned by the link $\bar L$.

Given an $n$-beaded link $L\:\Theta\to P_n$ and a PL $2$-sphere $\Sigma$ in the interior of $P_n$ that meets $L(\Theta)$ 
transversely in two points, by Alexander's Schoenflies theorem it bounds two complementary $3$-balls in $S^3$ and hence 
partitions $L$ into an $(i+1)$-beaded link $L'$ and an $(n-i+1)$-beaded link $L''$ for some $i\le n$.
When $n=0$, the link $L$ can be easily reconstructed from the (oriented) $1$-beaded links $L'$ and $L''$, and in this case 
we write $L=L'\#L''$.

On the other hand, given a link $L\:mS^1\to S^3$ and an index $i\in\{1,\dots,m\}$, we can represent $L$ as the closure of 
a $1$-beaded link $L[i]\:(mS^1\but\{i\}\x S^1)\cup I\to B^3$, which is easily seen to be well-defined up to ambient isotopy.
Given another link $L'\:m'S^1\to S^3$ and another index $j\in\{1,\dots,m'\}$, we get the link
$L[i]\#L'[j]\:(m+m'-1)S^1\to S^3$, which is known as the Hashizume connected sum $L\#_{i,j} L'$ \cite{Has}.
It is easily seen to yield a well-defined operation on pairs $([L],i)$, where $[L]$ is the ambient isotopy type of the link $L$ in $S^3$, 
and $i$ is a selected component of $L$.
Moreover, it is easy to see that this operation is associative and that the unknot is its unit.

If $K$ and $K'$ are knots, then $K\#_{1,1}K'$ is their usual connected sum $K\#K'$.
It is well-known that connected sum of knots is commutative, and the same proof (see \cite{BZ}*{Figure 7.3} or 
\cite{Fox-qt}*{p.\ 140}) works for the Hashizume connected sum.
It is also well-known that $g(K\#K')=g(K)+g(K')$, and the 13-line proof of this fact in \cite{Fox-qt}*{p.\ 141} 
works verbatim to show that $g(L\#_{i,j}L')=g(L)+g(L')$ for any links $L$, $L'$ and any $i$, $j$.
This in turn implies that whenever a PL $2$-sphere $\Sigma$ in the interior of $P_n$ meets an $n$-beaded link $L$ transversely
in two points and hence partitions it into beaded links $L'$ and $L''$, we have $g(L)=g(L')+g(L'')$.
Using this, it is easy to prove the non-cancellation property:

\begin{lemma} \label{non-cancellation}
Let $L$ be an $n$-beaded link, and suppose that the interior of $P_n$ contains a PL $2$-sphere $\Sigma$ that meets 
$L$ transversely in two points and partitions it into beaded links $L_1$ and $L_2$ so that $L_2$ is not a beaded unknot.
Then $L_1$ has either fewer beaded components than $L$ or genus smaller than that of $L$.
In particular, $L$ is not a beaded unknot.
\end{lemma}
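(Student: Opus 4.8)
The plan is to reduce the whole statement to the genus additivity $g(L)=g(L_1)+g(L_2)$ already established above, combined with a bookkeeping of beaded components across the cut. First I would note that $\Sigma$, lying in the interior of $P_n$, is disjoint from the closure arcs $\Xi$ (which sit in $\partial P_n\subset\partial B_1\cup\dots\cup\partial B_n$), and hence $\Sigma$ meets the closure $\bar L\subset S^3$ transversely in precisely the given two points. Since $\Sigma$ separates $S^3$ into two balls, every component of $\bar L$ crosses it an even number of times; as the total is $2$, a single component $c$ of $\bar L$ crosses $\Sigma$ twice while the remaining $m-1$ components each lie entirely in one of the two balls. Here $m$ denotes the number of beaded components of $L$, i.e.\ the number of components of $\bar L$.

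Next I would count beaded components. Cutting along $\Sigma$ breaks $c$ into an arc $c_1$ carried by $L_1$ and an arc $c_2$ carried by $L_2$; each of these closes up to a single knot and so is one beaded component, while each of the other $m-1$ components is carried in its entirety by exactly one of $L_1$, $L_2$. Writing $m_1$ and $m_2$ for the numbers of beaded components of $L_1$ and $L_2$, this gives the identity $m_1+m_2=m+1$, which encodes that $\Sigma$ realizes a connected-sum splitting of the component $c$.

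The argument then splits into two cases according to $m_2$. If $m_2\ge 2$, then $m_1=m+1-m_2\le m-1$, so $L_1$ has strictly fewer beaded components than $L$. If instead $m_2=1$, then $L_2$ is a beaded knot; since it is not a beaded unknot, its closure is a nontrivial knot and therefore has positive genus, so $g(L_2)\ge 1$, and feeding this into $g(L)=g(L_1)+g(L_2)$ yields $g(L_1)\le g(L)-1<g(L)$. This proves the main assertion. The final clause follows from the same dichotomy: when $m_2\ge 2$ we get $m=m_1+m_2-1\ge 2$, so $\bar L$ has at least two components and cannot be an unknot, while when $m_2=1$ we get $g(L)\ge 1$; in neither case is $L$ a beaded unknot.

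I expect the only genuinely delicate point to be the first step, namely checking that the two intersection points lie on a single component of $\bar L$ and that the induced splitting of that component is faithfully recorded by the identity $m_1+m_2=m+1$. The parity argument disposes of the former cleanly, and once the count of beaded components is in place everything else is immediate from the previously proven additivity of the genus and the standard fact that the unknot is the only genus-zero knot.
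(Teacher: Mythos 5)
Your proof is correct and takes essentially the same route as the paper's: the paper's three-line argument rests on exactly the same ingredients, namely the component count $m_1+m_2=m+1$ (left implicit there, in the step ``if $L_1$ has as many beaded components as $L$, then $L_2$ is a beaded knot''), the previously established additivity $g(L)=g(L_1)+g(L_2)$, and the fact that a nontrivial knot has genus at least $1$. Your only addition is to spell out the parity and bookkeeping argument behind that count, which the paper takes for granted.
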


\begin{proof}
If $L_1$ has the same number of beaded components as $L$, then $L_2$ is a beaded knot.
But $L_2$ is not a beaded unknot, so $g(L_2)\ge 1$.
Then $g(L)=g(L_1)+g(L_2)$ implies $g(L_1)<g(L)$.
\end{proof}

A link $L$ (in $S^3$) is called {\it prime} if it is non-split and not an unknot, but for every representation of $L$ as $A\#_{i,j}B$, 
either $A$ or $B$ is the unknot.
$L$ is {\it composite} if it is non-split, not an unknot and not a prime link.
Thus a non-split link $L$ is composite if and only if $S^3$ contains a PL $2$-sphere that meets $L$ in two points and partitions it into 
two $1$-beaded links $L_1$, $L_2$ neither of which is a beaded unknot.
(The ``if'' assertion uses the case $n=0$ of Lemma \ref{non-cancellation}.)

\begin{theorem}[Hashizume \cite{Has}] \label{hashizume}
Every non-split link $L$ admits a unique decomposition into prime factors $L_0,\dots,L_r$ (well-defined up to ambient isotopy
and up to a permutation of the factors) with respect to the operation of Hashizume connected sum.

Moreover, $S^3$ contains $r$ pairwise disjoint $2$-spheres, each meeting $L$ transversely in two points, 
that partition $L$ into $r+1$ beaded links whose closures are $L_0,\dots,L_r$.
\end{theorem}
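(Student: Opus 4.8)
The plan is to establish existence by a short well-founded induction powered by the non-cancellation lemma, and to establish uniqueness by the classical device of comparing two systems of splitting spheres and minimizing the number of their intersection circles, with primeness and non-splitness entering precisely at the innermost-disk step.

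For existence I would induct on the number of beaded components of $L$ together with its genus $g(L)$, ordered lexicographically (the sum of the two invariants works equally well, since genus additivity gives $g(L_1)\le g(L)$ and a splitting sphere never increases the component count). If $L$ is prime or an unknot there is nothing to prove. Otherwise $L$ is composite, so some PL $2$-sphere $\Sigma$ in the interior of the relevant $P_n$ meets $L$ transversely in two points and partitions it into beaded links $L_1$ and $L_2$, neither a beaded unknot. Applying Lemma \ref{non-cancellation} to each of the two orderings shows that both $L_1$ and $L_2$ are strictly smaller in the chosen order, so the induction hypothesis factorizes each of them; splicing the two factorizations along $\Sigma$ factorizes $L$. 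Because each subsequent splitting sphere may be taken inside a single complementary region of the spheres already chosen, the spheres can be made pairwise disjoint, which yields the geometric ``Moreover'' clause at the same time.

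For uniqueness, suppose $L$ carries two such families of pairwise disjoint spheres, $\mathcal{S}=\{\Sigma_1,\dots,\Sigma_r\}$ and $\mathcal{S}'=\{\Sigma_1',\dots,\Sigma_s'\}$, whose complementary beads have closures the primes $L_0,\dots,L_r$ and $M_0,\dots,M_s$. First I would put $\mathcal{S}$ and $\mathcal{S}'$ in general position, so that $\mathcal{S}\cap\mathcal{S}'$ is a union of circles disjoint from $L$, and then isotope $\mathcal{S}'$ keeping $L$ fixed so as to minimize the number of these circles; the object is to force this minimum to be zero. If a circle survives, choose one, $c$, innermost on $\mathcal{S}'$, so that it bounds a disk $D'$ whose interior misses $\mathcal{S}$. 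Then $D'$ lies in a single bead $P$ of $\mathcal{S}$, its boundary $c$ lies on one boundary sphere $\Sigma_i$ of $P$, and $c$ splits $\Sigma_i$ into disks $D_1,D_2$.

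The hard part will be the ensuing case analysis. A parity count --- any $2$-sphere meets the closed link $L$ in an even number of points --- lets me choose a capping disk $D_j\in\{D_1,D_2\}$ so that $S^*:=D'\cup D_j$ lies in the closed bead $\overline P$ and meets $L$ in either $0$ or $2$ points. If $S^*$ misses $L$, then, since $L$ is non-split, $L$ lies entirely on one side of $S^*$ and the other side is a ball disjoint from $L$; innermostness of $c$ ensures this ball contains no sphere of $\mathcal{S}$ other than $\Sigma_i$, so pushing $D'$ across it deletes $c$. If $S^*$ meets $L$ in two points, then $S^*$ partitions the prime factor $L_k$ that closes up the beaded link in $P$, so by primeness one side is a beaded unknot, and (after arranging the innermost configuration) this again supplies a ball across which $D'$ can be pushed to delete $c$. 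Either way the number of intersection circles strictly decreases, contradicting minimality, so $\mathcal{S}$ and $\mathcal{S}'$ may be taken disjoint. Once they are, each $\Sigma_j'$ lies in a single bead $P_k$ and meets the prime factor $L_k$ there in two points, so primeness forces $\Sigma_j'$ to be parallel to a boundary sphere of $P_k$ --- the alternative, that it cuts off a ball meeting $L$ in an unknotted arc, would make an $M$-factor trivial and contradict primeness of the $M$'s. These parallelisms set up a bijection between $\mathcal{S}$ and $\mathcal{S}'$ under which corresponding beads, and hence the factors $L_0,\dots,L_r$ and $M_0,\dots,M_s$, are ambient isotopic; in particular $r=s$. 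I expect this innermost-disk case analysis --- specifically, the bookkeeping needed to organize the innermost choices so that each isotopy strictly reduces the intersection number without creating new circles, and so that the relevant ball is free of other spheres of $\mathcal{S}$ --- to be the main obstacle.
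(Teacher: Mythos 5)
Your overall framework (beaded links in punctured spheres, genus additivity, non-cancellation, innermost circles) is the same as the paper's, but both halves of your argument have genuine gaps. In the existence half, you write that since $L$ is composite, ``some PL $2$-sphere $\Sigma$ in the interior of the relevant $P_n$'' splits it nontrivially. Compositeness of the \emph{closure} only gives a splitting sphere somewhere in $S^3$, and that sphere may run through the beads $B_k$ and the closing arcs; getting a splitting sphere that lies in the interior of $P_n$ is precisely Lemma \ref{separation} of the paper, which is proved by a genuine induction on the number of beads (cap a bead by an arc, cone it to a point, apply the inductive hypothesis, then squeeze the bead into a small ball missing the sphere). This is the central technical point of the existence half --- indeed the reason the paper introduces punctured spheres at all --- and it cannot be discharged by the word ``so''; the same unproved claim reappears when you say each subsequent splitting sphere ``may be taken inside a single complementary region.''

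In the uniqueness half, the move you rely on fails in your second case. When $S^*=D'\cup D_j$ meets $L$ in two points, primeness does force one side of $S^*$ inside the bead $P$ to be a beaded \emph{unknot}, but a beaded unknot can itself have beads: the ball bounded by $S^*$ on that side will in general contain boundary spheres of $P$ (i.e.\ spheres of $\mathcal{S}$), inside which sit other prime pieces of $L$, possibly including closed components, together with further spheres of both systems. An ambient isotopy fixing $L$ cannot sweep $D'$ across such a ball (the track of the sweep covers the ball, so the moving sphere would have to cross $L$), and dragging the contents along creates new intersections with the fixed system $\mathcal{S}$; so your move does not strictly decrease the number of circles and the minimality contradiction collapses. (Even in your empty case the justification is off: the ball contains no sphere of $\mathcal{S}$ not because of innermostness but because every sphere of $\mathcal{S}$ meets $L$ while that ball misses $L$.) The paper is structured to dodge exactly this: the innermost circle is taken on $\Sigma$ in the complement of $\Delta_+$ only, so the disk $D$ meets $L$ in at most one point, and instead of isotoping one \emph{replaces} the relevant sphere of the other system by the surgered push-off $S_+$, checking that the induced multiset of closures is unchanged because splitting off a beaded unknot does not change closures (Lemma \ref{punctured spheres}(a)); the comparison of the two now-disjoint systems is then a separate combinatorial induction (Lemma \ref{punctured spheres}(b)), in which each prime $L_i$ is a Hashizume sum of the pieces $\Lambda_{ij}$, so all but one piece is empty or an unknot. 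Your substitute for that last step --- that each $\Sigma_j'$ must be ``parallel to a boundary sphere'' of its bead --- is neither justified nor needed; disjointness alone does not give parallelism, and the bijection between factors really does require the part (b) argument.
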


\begin{proof} If $L$ is prime, there is nothing to prove.
Else we get the desired collection of $r$ spheres by repeatedly applying the following lemma along with Lemma \ref{non-cancellation}. 

\begin{lemma} \label{separation}
Let $L$ be an $n$-beaded link whose closure is a composite link.
Then the interior of $P_n$ contains a PL $2$-sphere $\Sigma$ that meets $L$ transversely in two points and partitions it 
into beaded links $L_1$, $L_2$, neither of which is a beaded unknot.
\end{lemma}

\begin{proof} The case $n=0$ holds by the definition of a composite link.
Suppose that the lemma holds for $(n-1)$-beaded links.
By adjoining to $L$ an arc $J$ in $\partial B_n$ we obtain an $(n-1)$-beaded link $\bar L$ in $P_{n-1}=P_n\cup B_n$.
We may identify $B_n$ with the cone over its boundary, $c*\partial B_n$.
Then there is an ambient isotopy of $P_{n-1}$ keeping $L$ fixed, moving $J$ along $c*J$ and taking it onto $c*\partial J$.
Let $\hat L$ be the $(n-1)$-beaded link in $P_{n-1}$ obtained by adjoining $c*\partial J$ to $L$.
Now by the induction hypothesis the interior of $P_{n-1}$ contains a PL $2$-sphere $\Sigma'$ that meets $\hat L$ 
transversely in two points and partitions it into two beaded links neither of which is a beaded unknot.
We may assume that $\Sigma'$ is disjoint from the cone point $c$.
Let $h$ be a self-homeomorphism of the pair $(P_{n-1},\hat L)$ with support in a small neighborhood of $B_n$
that squeezes $B_n$ radially onto a small ball $\epsilon B_n$ about $c$, disjoint from $\Sigma'$. 
Then $\Sigma:=h^{-1}(\Sigma')$ lies in the interior of $P_n$, meets $\hat L$ transversely in two points and partitions it 
into two beaded links neither of which is a beaded unknot.
The same holds if we replace $\hat L$ with $\bar L$, and consequently also if we replace it with $L$.
\end{proof}

The uniqueness of the factorization now follows from the following lemma.

\begin{lemma}\label{punctured spheres}
Let $L$ be a non-split link and let $\Sigma$ be the union of a collection of $r$ pairwise disjoint PL $2$-spheres $\Sigma_i$ in $S^3$, 
each meeting $L$ transversely in two points, such that the $r+1$ beaded links cut out of $L$ by $\Sigma$ have prime closures.
Let $L/\Sigma$ denote the unordered collection of the ambient isotopy types of these closures.

(a) Let $\Sigma'$ be another union of the same kind.
Then there exists yet another union $\Sigma''$ of the same kind such that $\Sigma''\cap\Sigma=\emptyset$ and
$L/\Sigma''=L/\Sigma'$.

(b) $L/\Sigma''=L/\Sigma$.
\end{lemma}

\begin{proof}[Proof. (a)] We may assume that $\Sigma$ intersects $\Sigma'$ transversely in a closed $1$-manifold $M$.
The proof is by induction on the number of components of $M$.
If $M$ is empty, then we let $\Sigma''=\Sigma'$.
Suppose that $M$ is non-empty.
As long as we fix some orientations on $S^1\sqcup\dots\sqcup S^1$, on $S^3$ and on $\Sigma$, each component of $\Sigma$ meets $L$ in two points 
of the opposite signs, and $\Sigma\cap L=\Delta_+\cup\Delta_-$, where $\Delta_+$ consists 
of the positive intersections and $\Delta_-$ of the negative ones.
Now $M$ contains a component $C$ that is innermost in $\Sigma\but\Delta_+$ in the sense that $C$ bounds a disk $D$ 
in $\Sigma\but\Delta_+$ whose interior is disjoint from $M$.
Thus the interior of $D$ is also disjoint from $\Sigma'$.
Let $D_+$ and $D_-$ be the two disks bounded by $C$ in $\Sigma'$.
The $2$-spheres $D\cup D_+$ and $D\cup D_-$ are unknotted in $S^3$ by Alexander's Schoenflies theorem, and hence bound in $S^3$ 
balls $B_+$ and $B_-$ (respectively) such that $B_+\cap B_-=D$ and $B_+\cup B_-$ is a $3$-ball bounded by the $2$-sphere $D_+\cup D_-$.
Let $B$ be the other $3$-ball bounded in $S^3$ by the latter $2$-sphere, and let $H$ be a $2$-handle attached to $B$ along $D$.
We may assume that $H$ is thin enough to be either disjoint from $L$ (if $D$ is disjoint from $\Delta_-$) or to meet $L$
in an arc $J$ such that $(H,D,J)$ is PL homeomorphic to $\big([-1,1]^3,\,[-1,1]^2\x\{0\},\,\{0\}^2\x[-1,1]\big)$ (if $D$ meets $\Delta_-$).
Then $\partial(B\cup H)$ consists of two disjoint $2$-spheres $S_+$ and $S_-$, which lie in $B_+$ and $B_-$ respectively.

If $D$ is disjoint from $\Delta_-$, then $C$ has zero linking number with every component of $L$, and consequently both points of
intersection between $L$ and the $2$-sphere $D_+\cup D_-$ must lie in one of the disks $D_+$ and $D_-$, say in $D_+$.
Then the $2$-sphere $D\cup D_-$ is disjoint from $L$.
Since $L$ is non-split, at least one of the $3$-balls $B_-$ and $B\cup B_+$ bounded by this $2$-sphere must also 
be disjoint from $L$.
But $L$ definitely meets $D_+=B\cap B_+$, so it must be disjoint from $B_-$.
Then we define $\Sigma''$ as the modification of $\Sigma'$ obtained by replacing the $2$-sphere $D_+\cup D_-$ with 
the $2$-sphere $S_+$.
Then the intersection of $\Sigma''$ with $\Sigma$ is a subset of $M\but C$, and hence has fewer components than $M$.
Since $L$ is disjoint from $B_-$, it it easy to see that $L/\Sigma''=L/\Sigma'$.

If $D$ meets $\Delta_-$, then it meets $L$ in one point, and consequently both $D_+$ and $D_-$ must also meet 
the same component of $L$, each in one point (by considering the linking numbers).
The closure of the component of $S^3\but\Sigma'$ that contains the interior of $D$ is a punctured $3$-sphere $P_n$
(for some $n\ge 1$) such that $\partial P_n$ contains the $2$-sphere $D_+\cup D_-$.
Now $P_n$ intersects $L$ in an $n$-beaded link $\Lambda$, which is in turn partitioned by $D$ into 
an $(i+1)$-beaded link $\Lambda_+$ in the punctured $3$-sphere $P_n\cap B_+$, and an $(n-i)$-beaded link 
$\Lambda_-$ in the punctured $3$-sphere $P_n\cap B_-$, for some $i\le n$.
It is easy to see that the closure of $\Lambda$ is a Hashizume connected sum of the closures of $\Lambda_+$ and $\Lambda_-$.
Since the closure of $\Lambda$ is prime, some $\Lambda_\epsilon$ must be a beaded unknot, say, $\Lambda_-$.
Then we define $\Sigma''$ as the modification of $\Sigma'$ obtained by replacing the $2$-sphere $D_+\cup D_-$ 
with the $2$-sphere $S_+$. 
(It is easy to see that $S_+$ lies in $P_n\cap B_+$).
The intersection of $\Sigma''$ with $\Sigma$ is a subset of $M\but C$, and hence has fewer components than $M$.
Since $L$ meets $P_n\cap B_-$ in a beaded unknot, it is easy to see that $L/\Sigma''=L/\Sigma'$.
\end{proof}

\begin{proof}[(b)] Let $Q_0,\dots,Q_r$ be the closures of the components of $S^3\but\Sigma$ and let $R_0,\dots,R_s$ be those of $S^3\but\Sigma''$.
Since $\Sigma\cap\Sigma''=\emptyset$, each $P_{ij}:=Q_i\cap R_j$ is either empty or a punctured $3$-sphere (indeed, $S^3\but(Q_i\cap R_j)$ equals 
$(S^3\but Q_i)\cup(S^3\but R_j)$, which is a union of balls whose boundaries are pairwise disjoint).
Since each $R_j\cap R_k$ is either empty or a common boundary component, the same is true of each $P_{ij}\cap P_{ik}$; and similarly
for each $P_{ij}\cap P_{kj}$.
Let $L_i$ and $L'_j$ be the closures of the beaded links cut out of $L$ by $Q_i$ and $R_j$, respectively. 
Each $P_{ij}$ intersects $L$ either in $\emptyset$ or in a beaded link, whose closure will be denoted $\Lambda_{ij}$.
(The closure of $\emptyset$ is formally set to be $\emptyset$.)
Then each $L_i$ can be described as a certain Hashizume connected sum of those links $\Lambda_{ij}$, $j=1,\dots,s$, that are nonempty.
Since $L_i$ is prime, it follows by induction that there exists a $j_i$ such that $\Lambda_{ij}$ is either empty or the unknot
for all $j\ne j_i$, whereas $\Lambda_{ij_i}=L_i$ (up to ambient isotopy).
Similarly, for each $j$ there exists an $i_j$ such that $\Lambda_{ij}$ is either empty or the unknot for all $i\ne i_j$, whereas 
$\Lambda_{i_jj}=L'_j$.
Since $L_i$ is not an unknot, $i_{j_i}=i$, and since $L'_j$ is not an unknot, $j_{i_j}=j$.
Thus $i\mapsto j_i$ is a bijection such that $L_i=L'_{j_i}$.
\end{proof}
\end{proof}

\section{Local knots} \label{local knots}

\begin{proof}[Proof of Theorem \ref{global}. The case of links]
Given an $m$-component link $L$, let us consider its unique partition into non-split sublinks (Proposition \ref{partition}(a)) 
and the unique decomposition of each non-split sublink into prime factors (Theorem \ref{hashizume}).
By discarding those prime factors that are knots, we obtain an $m$-component link $\Gamma(L)$.
The discarded factors can also be put together to form a totally split $m$-component link $\Lambda(L)$.
Clearly, $\Gamma(L)$ has no local knots, and $L=\Gamma(L)\#\Lambda(L)$ (up to ambient isotopy).
\end{proof}

\begin{proof}[The case of string links]
It suffices to show that every string link (not necessarily non-split) admits a unique decomposition into a connected sum of 
(i) a string link that has no local knots and (ii) prime local knots (the latter being well-defined up to a permutation).
This can be done along the lines of the proof of Theorem \ref{hashizume}.

In more detail, we are given a string link $L\:mI\to I^3$.
The most important amendment in the proof is that all the punctured spheres will now be contained in the interior of $I^3$,
apart from the ``outer'' one, which lies in $I^3$ and contains $\partial I^3$; this outer punctured sphere will intersect $L$
in a tangle with no local knots, and the ``inner'' ones in beaded knots whose closures are prime.
If the genus of a string link is defined to be the genus of its closure, then the proof of the existence of 
a decomposition goes through without essential changes.
The amended proof of Lemma \ref{punctured spheres}(a) still has two cases: $D$ is disjoint from $\Delta_-$
and $D$ meets $\Delta_-$.
In the first case we use that, even though the given string link $L$ may be non-split, it contains no closed components.
In the second case we now have to consider two subcases: $P_n$ is an inner punctured sphere and $P_n$ is the outer punctured sphere.
The first subcase is treated like before.
In the second subcase one of the two balls $B_+$, $B_-$ lies in the interior of $I^3$.
If $B_\epsilon$ lies in the interior of $I^3$, then since $P_n$ intersects $L$ in a tangle with no local knots, 
$\Lambda_\epsilon$ must be a beaded unknot.
The remainder of the proof of the second subcase proceeds like before.
The amended proof of Lemma \ref{punctured spheres}(b) goes through with aid of the following observation:
if $Q_i$ is the outer punctured sphere and $R_j$ is an inner punctured sphere, then $\Lambda_{ij}$ must be either empty
or the unknot, using that $L_i$ is a tangle with no local knots.
\end{proof}

\section*{Acknowledgements}

I would like to thank A. Zastrow for sending me a copy of \cite{Ro0}. 
I'm also grateful to the referee for useful remarks.

\end{document}